\newtheorem{prop}{Proposition}[section]
\newtheorem{thm}[prop]{Theorem}
\newtheorem{cor}[prop]{Corollary}
\newtheorem{lem}[prop]{Lemma}
\theoremstyle{definition}
\newtheorem{que}[prop]{Question}
\newtheorem{defn}[prop]{Definition}
\newtheorem*{claim*}{Claim}
\newcommand{\bP}{\mathbb{P}}
\newcommand{\bC}{\mathbb{C}}
\newcommand{\bQ}{\mathbb{Q}}
\newcommand{\bZ}{\mathbb{Z}}
\newcommand{\cO}{\mathcal{O}}
\newcommand{\cM}{\mathcal{M}}
\newcommand{\cJ}{\mathcal{J}}
\newcommand{\mult}{\mathrm{mult}}
\newcommand{\lct}{\mathrm{lct}}
\newcommand{\vol}{\mathrm{vol}}
\newcommand{\ord}{\mathrm{ord}}
\newcommand{\codim}{\mathrm{codim}}
\begin{document}

\title{K-stability of birationally superrigid Fano varieties}

\author{Charlie Stibitz}
\address{Department of Mathematics, Princeton University, Princeton, NJ, 08544-1000.}
\email{cstibitz@math.princeton.edu}

\author{Ziquan Zhuang}
\address{Department of Mathematics, Princeton University, Princeton, NJ, 08544-1000.}
\email{zzhuang@math.princeton.edu}

\date{}

\maketitle

\begin{abstract}
    We prove that every birationally superrigid Fano variety whose alpha invariant is greater than (resp. no smaller than) $\frac{1}{2}$ is K-stable (resp. K-semistable). We also prove that the alpha invariant of a birationally superrigid Fano variety of dimension $n$ is at least $\frac{1}{n+1}$ (under mild assumptions) and that the moduli space (if exists) of birationally superrigid Fano varieties is separated.
\end{abstract}

\section{Introduction}

The notion of birational superrigidity was introduced as a generalization of Iskovskikh and Manin's work \cite{IM-quartic} on the non-rationality of quartic threefolds; on the other hand, the concept of K-stability emerges in the study of K\"ahler-Einstein metrics on Fano manifolds. While the two notions have different nature of origin, they seem to resemble each other in the following sense: it is well known that a Fano variety $X$ of Picard number $1$ is birationally superrigid if and only if $(X,M)$ has canonical singularities for every movable boundary  $M\sim_\bQ -K_X$; on the other hand, by the recent work of \cite{Fujita-delta,BJ-delta}, the K-(semi)stability of $X$ is (roughly speaking) characterized by the log canonicity of basis type divisors, which is the average of a basis of some pluri-anticanonical system. In other words, both notions are tied to the singularities of certain anticanonical $\bQ$-divisors and so it is very natural to expect some relation between them. Indeed, the slope stability (a weaker notion of K-stability) of birationally superrigid Fano manifolds has been established by \cite{oo-slope-stability} under some mild assumptions, and it is conjectured \cite{oo-slope-stability,alpha-of-rigid-3fold} that birationally rigid Fano varieties are always K-stable.

In this note, we give a partial solution to this conjecture. Here is our main result:

\begin{defn}
The alpha invariant $\alpha(X)$ of a $\bQ$-Fano variety $X$ (i.e. $X$ has klt singularities and $-K_X$ is ample) is defined as the supremum of all $t>0$ such that $(X,tD)$ is log canonical for every effective $\bQ$-divisor $D\sim_\bQ -K_X$.
\end{defn}

\begin{thm} \label{thm:main}
Let $X$ be a $\bQ$-Fano variety of Picard number $1$. If $X$ is birationally superrigid $($or more generally, $(X,M)$ is log canonical for every movable boundary $M\sim_\bQ -K_X)$ and $\alpha(X)\ge \frac{1}{2}$ $($resp. $>\frac{1}{2})$, then $X$ is K-semistable $($resp. K-stable$)$.
\end{thm}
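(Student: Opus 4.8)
The strategy is to use the basis-type criterion for K-stability of Fujita and Blum--Jonsson \cite{Fujita-delta,BJ-delta}. Recall that an $m$-basis type divisor is the average $D=\frac{1}{md_m}\sum_{i=1}^{d_m}D_i$ of a basis $D_1,\dots,D_{d_m}$ of $|-mK_X|$, that $S_m(E)=\sup_D\ord_E(D)$ over such $D$, that $S(E)=\lim_{m\to\infty}S_m(E)=\frac{1}{(-K_X)^n}\int_0^\infty\vol(-\pi^*K_X-tE)\,dt$, and that $X$ is K-semistable (resp. K-stable) if and only if $\delta(X):=\inf_E A_X(E)/S(E)\ge 1$ (resp. $>1$), the infimum being over all prime divisors $E$ over $X$. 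So it suffices to prove $A_X(E)\ge S(E)$ for every such $E$, with a uniform strict inequality when $\alpha(X)>\frac12$.

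If the center $c_X(E)$ is a prime divisor $\Gamma\subset X$, then $A_X(E)=1$, and since $\Gamma\sim_\bQ\lambda(-K_X)$ for some $\lambda>0$ (Picard number one) the $\bQ$-divisor $\frac1\lambda\Gamma\sim_\bQ-K_X$ is effective, so $\alpha(X)\ge\frac12$ forces $\lambda\ge\frac12$; hence $S(E)=\frac{1}{\lambda(n+1)}\le\frac{2}{n+1}\le 1=A_X(E)$, strictly once $n\ge2$. So assume $\codim c_X(E)\ge2$. The heart of the proof is then a decomposition lemma for basis type divisors: fixing a model $\pi\colon Y\to X$ extracting $E$ and a basis of $H^0(-mK_X)$ adapted to the $\ord_E$-filtration, one writes the pullback of each basis member as $(\text{fixed part})+(\text{mobile part})$ of the corresponding sublinear system $|\pi^*(-mK_X)-jE|$, and after averaging and pushing forward obtains $D=\Phi+M'$ on $X$ with $\Phi$ effective, $M'$ movable, $\Phi\sim_\bQ\eta_m(-K_X)$, $M'\sim_\bQ(1-\eta_m)(-K_X)$ for some $\eta_m\in[0,1]$, and $\ord_E(D)=\ord_E(\Phi)+\ord_E(M')$. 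Now $\frac{1}{1-\eta_m}M'$ is a movable boundary $\sim_\bQ-K_X$, so the hypothesis gives $A_X(E)\ge\frac{1}{1-\eta_m}\ord_E(M')$, while $\frac{1}{\eta_m}\Phi\sim_\bQ-K_X$ is effective, so $\alpha(X)\ge\frac12$ gives $A_X(E)\ge\frac{1}{2\eta_m}\ord_E(\Phi)$.

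The main obstacle is to combine these two bounds into $\ord_E(D)\le A_X(E)$ in the limit $m\to\infty$: the naive combination yields only the useless inequality $\ord_E(D)\le(1+\eta_m)A_X(E)$. What is needed is a finer, layer-by-layer accounting of how the fixed parts interact with $E$ under pushforward, showing in effect that $\Phi$ cannot be extremal for $\ord_E$ (i.e.\ cannot nearly attain the pseudoeffective threshold $\tau(E)$, as the $\alpha$-bound requires for equality) unless $M'$ is correspondingly non-extremal, so that the two contributions genuinely sum to at most $A_X(E)$; both hypotheses enter essentially here, and the constant $\frac12$ is exactly the $\alpha$-threshold at which the fixed-part estimate is sharp. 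Granting the lemma, $S(E)=\lim_m S_m(E)\le A_X(E)$ for all $E$, so $\delta(X)\ge1$ and $X$ is K-semistable; if $\alpha(X)>\frac12$ the effective-part bound improves to $A_X(E)\ge\frac{1/2+\varepsilon}{\eta_m}\ord_E(\Phi)$ for some fixed $\varepsilon>0$ depending only on $X$, which propagates to $\delta(X)\ge 1+\varepsilon'>1$, giving K-stability.
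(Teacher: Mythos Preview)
Your proposal has a genuine gap: you correctly identify that the naive combination of the two bounds gives only $\ord_E(D)\le(1+\eta_m)A_X(E)$, and then you invoke an unproved ``finer, layer-by-layer accounting'' lemma and write ``Granting the lemma\ldots''. That missing lemma is the entire content of the theorem; without it you have not shown anything beyond the trivial bound. There is no obvious reason the decomposition $D=\Phi+M'$ of an individual $m$-basis type divisor should satisfy the trade-off you describe between extremality of $\Phi$ and non-extremality of $M'$, and you give no mechanism for it. (There is also a minor slip in the divisorial case: $c_X(E)=\Gamma$ does not imply $A_X(E)=1$, only $E=\Gamma$ does; but this case is easy anyway.)

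The paper avoids the basis-type decomposition entirely and works with restricted volumes $\vol_{Y|F}(-\pi^*K_X-xF)$. The key structural input from the hypotheses is this: since $(X,M)$ is lc for every movable boundary $M\sim_\bQ -K_X$ and $\rho(X)=1$, there is \emph{at most one} irreducible divisor $D\sim_\bQ -K_X$ with $\ord_F(D)>A_X(F)$, and it must have $\ord_F(D)=\tau(F)$. This forces the restricted volume to scale exactly as $\bigl(\tfrac{\tau-x}{\tau-A}\bigr)^{n-1}$ for $x>A$, while log-concavity bounds it below by $(x/A)^{n-1}$ for $x\le A$. Plugging these into the barycenter identity $\int_0^\tau (x-b)\vol_{Y|F}(-\pi^*K_X-xF)\,\mathrm{d}x=0$ yields $\frac{\tau-2A}{n(n+1)}+\frac{A-b}{n}\ge 0$, so $b>A$ would force $\tau>2A$, contradicting $\alpha(X)\ge\tfrac12$. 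This is where the constant $\tfrac12$ actually appears, and it comes from a volume computation, not from bounding a fixed part of a basis-type divisor.
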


It is well known that smooth Fano hypersurfaces of index $1$ and dimension $n\ge3$ are birationally superrigid \cite{IM-quartic,dFEM-bounds-on-lct,dF-hypersurface} and their alpha invariants are at least $\frac{n}{n+1}$ \cite{C-hypersurface-lct}, hence we have the following immediate corollary, reproving the K-stability of Fano hypersurfaces of index one:

\begin{cor} \cite{Fujita-alpha}
Let $X\subseteq\bP^n$ be a smooth hypersurface of degree $n\ge 4$, then $X$ is K-stable.
\end{cor}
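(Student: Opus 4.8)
The plan is to derive this statement directly from Theorem~\ref{thm:main}, so the whole argument reduces to verifying that a smooth hypersurface $X\subseteq\bP^n$ of degree $n\ge4$ satisfies the three hypotheses of that theorem. Note first that $\dim X=n-1\ge3$ and, by adjunction, $-K_X=\cO_X(n+1-n)=\cO_X(1)$.

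Next I would check that $X$ is a $\bQ$-Fano variety of Picard number $1$: smoothness gives canonical (in particular klt) singularities, $-K_X=\cO_X(1)$ is ample, so $X$ is Fano of index $1$, and since $\dim X\ge3$ the Lefschetz hyperplane theorem yields $\Pic(X)=\bZ\cdot\cO_X(1)$, whence $\rho(X)=1$. Then $X$ is birationally superrigid: this is \cite{IM-quartic} in the case $n=4$ and \cite{dFEM-bounds-on-lct,dF-hypersurface} in general. Finally, $X$ is a smooth Fano hypersurface of index $1$ and dimension $n-1$, so by \cite{C-hypersurface-lct} its alpha invariant satisfies
\[
\alpha(X)\ \ge\ \frac{n-1}{n}\ \ge\ \frac34\ >\ \frac12
\]
for all $n\ge4$. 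Theorem~\ref{thm:main} then applies and shows that $X$ is K-stable, recovering Fujita's result \cite{Fujita-alpha}.

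Presented this way the corollary carries no real difficulty of its own --- it is essentially a bookkeeping exercise matching up known facts with the hypotheses of Theorem~\ref{thm:main}. The substantive inputs are all imported, and among them the birational superrigidity of every smooth index-$1$ hypersurface is by far the deepest (the quartic threefold case of \cite{IM-quartic} already being a landmark result), while the lower bound on $\alpha(X)$ and the Picard-number computation are comparatively routine. One small point worth flagging is the role of the hypothesis $n\ge4$: it is forced by the Picard-number-one requirement (and by birational superrigidity), not by the alpha-invariant estimate, which would already give $\alpha(X)>\tfrac12$ for $n\ge3$.
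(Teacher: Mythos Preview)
Your proposal is correct and follows exactly the route indicated in the paper: verify Picard number one (via Lefschetz), cite \cite{IM-quartic,dFEM-bounds-on-lct,dF-hypersurface} for birational superrigidity and \cite{C-hypersurface-lct} for the alpha-invariant bound, then apply Theorem~\ref{thm:main}. The paper treats this as an immediate consequence and gives no separate proof, so your expanded write-up is if anything more detailed than the original.
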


Another application is to the K-stability of general index two hypersurfaces. By \cite{C-hypersurface-lct} and \cite{Puk-index-two}, such hypersurfaces have alpha invariant $\frac{1}{2}$ and $(X,M)$ is lc for every movable boundary $M\sim_\bQ -K_X$. Hence by analyzing the equality case in Theorem \ref{thm:main}, we prove:

\begin{cor} \label{cor:index two}
Let $n\ge 16$ and let $U\subseteq \bP H^0(\bP^{n+1},\cO_{\bP^{n+1}}(n))$ be the parameter space of smooth index two hypersurfaces. Let $T\subseteq U$ be the set of hypersurfaces that are not K-stable. Then $\codim_U T\ge \frac{1}{2}(n-11)(n-10)-10$.
\end{cor}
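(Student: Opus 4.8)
The strategy is to combine Theorem~\ref{thm:main} with an analysis of its equality case and with the codimension estimates of \cite{Puk-index-two}. First, for every smooth $X\in U$ one has $-K_X\sim\cO_X(2)$ and $\alpha(X)=\tfrac12$: the bound $\alpha(X)\le\tfrac12$ follows by taking $D=2H$ for a general (hence smooth) hyperplane section $H$, since then $(X,tD)=(X,2tH)$ is log canonical precisely for $t\le\tfrac12$, while $\alpha(X)\ge\tfrac12$ is \cite{C-hypersurface-lct}. By \cite{Puk-index-two} there is a closed subset $T_0\subseteq U$ with $\codim_U T_0\ge\tfrac12(n-11)(n-10)-10$ such that $(X,M)$ is log canonical for every movable boundary $M\sim_\bQ-K_X$ whenever $X\in U\setminus T_0$; hence Theorem~\ref{thm:main} shows that every such $X$ is K-semistable, and it remains to bound the codimension of $T_1:=\{X\in U\setminus T_0:\ X\text{ is K-semistable but not K-stable}\}$, since $T\subseteq T_0\cup T_1$.

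Next I would open up the proof of Theorem~\ref{thm:main} in the borderline case $\alpha(X)=\tfrac12$. For $X\in T_1$ one has $\delta(X)=1$ (by \cite{Fujita-delta,BJ-delta}), so there is a prime divisor $E$ over $X$ with $A_X(E)=S_X(E)$, and all the inequalities used in the proof must become equalities for $E$. Re-running the argument while tracking the equality, I expect that the step converting a ``too singular'' basis type divisor into a movable boundary that violates canonicity yields, in this case, a movable boundary $M\sim_\bQ-K_X$ such that $(X,M)$ is log canonical but not klt along a divisor whose centre is a single point $p\in X$; equivalently, $\{p\}$ is an isolated (zero-dimensional) non-klt centre of $(X,M)$. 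This really is a degeneration condition on $X$: innocuous reducible boundaries such as $H_1+H_2$ (with $H_i$ general hyperplane sections) do not qualify, their non-klt locus $H_1\cap H_2$ being positive-dimensional. Thus $T_1\subseteq T_1':=\{X\in U\setminus T_0:\ \text{some movable }M\sim_\bQ-K_X\text{ has an isolated non-klt centre }\{p\}\subset X\}$.

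Finally one must bound $\codim_U T_1'$. Since the boundary $M$ in the definition of $T_1'$ is log canonical, inversion of adjunction together with the Noether--Fano method should force the underlying movable linear system to be maximally singular at $p$ in a precise quantitative sense, which imposes on the pair $(F,p)$ --- where $F$ is the defining polynomial of $X$ --- a controlled number of independent conditions, exactly as in the regularity analysis of \cite{Puk-index-two}. Pushing this parameter count through is the step I expect to be the main technical obstacle (the preceding equality analysis of Theorem~\ref{thm:main} being the other delicate point): the count has to reproduce the constant $\tfrac12(n-11)(n-10)-10$, which is the reason for the hypothesis $n\ge16$ (it makes the bound positive), and it is the same count that controls $\codim_U T_0$. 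Granting it, $T\subseteq T_0\cup T_1'$ with both pieces of codimension at least $\tfrac12(n-11)(n-10)-10$ in $U$, which is the assertion.
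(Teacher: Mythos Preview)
Your overall plan---use the Pukhlikov codimension bound to get a set of ``good'' hypersurfaces, apply Theorem~\ref{thm:main} to make them K-semistable, then analyse the equality case---matches the paper. The gap is in the last two steps.

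First, your reading of the equality case is off. What actually drops out (Corollary~\ref{cor:equality} in the paper) is a dreamy divisor $F$ which is simultaneously an lc place of some movable boundary $M\sim_\bQ-K_X$ \emph{and} of $(X,\tfrac12 D)$ for some effective $D\sim_\bQ-K_X$. There is no reason a~priori for the centre of $F$ to be a point; your expectation that ``$\{p\}$ is an isolated non-klt centre of $(X,M)$'' is not what the inequality chain \eqref{eq:integral}--\eqref{eq:tau,A,b} yields when $b=A$. (A side issue: $\delta(X)=1$ does not by itself produce a divisor achieving the infimum; the paper uses Fujita's valuative criterion and a dreamy $F$ with $\beta(F)=0$.)

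Second, and more importantly, no second parameter count is needed: the paper shows $T_1=\varnothing$. For a \emph{regular} hypersurface in the sense of \cite{Puk-sing-index-two}, one proves two geometric facts:
\begin{enumerate}
\item every lc centre of $(X,D)$ with $D\sim_\bQ H$ is either a divisor or an isolated point (using $\mult_x D\le1$ away from finitely many points, \cite{Puk-mult-bound});
\item every lc centre of $(X,M)$ with $M\sim_\bQ-K_X$ movable is a codimension-two linear section (using $\mult_x(M^2)\le4$ in codimension two together with the classification of maximal singularities in \cite{Puk-sing-index-two}).
\end{enumerate}
These have no common dimension, so the common lc place demanded by Corollary~\ref{cor:equality} cannot exist, and every regular $X$ is K-stable. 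Thus $T$ is contained in the non-regular locus, whose codimension is already $\ge\tfrac12(n-11)(n-10)-10$ by \cite{Puk-sing-index-two}. Your proposed second count of $\codim_U T_1'$ is both unnecessary and, as you acknowledge, not carried out.
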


Although alpha invariant is in general hard to estimate, the known examples seem to suggest that birationally (super)rigid varieties have large alpha invariants. In view of Theorem \ref{thm:main}, it is therefore natural to ask the following question:

\begin{que} \label{que:alpha>1/2}
Let $X$ be a birationally superrigid Fano variety. Is it true that $\alpha(X) > \frac{1}{2}$?
\end{que}

Obviously, a positive answer to this question will confirm the K-stability of all birationally superrigid Fano varieties. At this point, we only have a weaker estimate:

\begin{thm} \label{thm:alpha>=1/(n+1)}
Let $X$ be a $\bQ$-Fano variety of Picard number $1$ and dimension $n\ge 3$. Assume that $X$ is birationally superrigid $($or more generally, $(X,M)$ is log canonical for every movable boundary $M\sim_\bQ -K_X)$, $-K_X$ generates the class group $\mathrm{Cl}(X)$ of $X$ and $|-K_X|$ is base point free, then $\alpha(X)\ge\frac{1}{n+1}$.
\end{thm}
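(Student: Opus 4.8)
We argue by contradiction. Suppose $\alpha(X)<\frac{1}{n+1}$. Then there is an effective $\bQ$-divisor $D\sim_\bQ -K_X$ with $\lct(X,D)<\frac{1}{n+1}$. Choose a divisor $E$ over $X$ computing $\lct(X,D)$; then
\[
\ord_E(D)\ >\ (n+1)\,A_X(E),
\]
and put $Z:=\mathrm{center}_X(E)$. First suppose $\codim_X Z=1$, so that $E=Z$ is a prime divisor on $X$ and $A_X(E)=1$, whence $\mult_Z D=\ord_E(D)>n+1$. On the other hand, since $-K_X$ generates $\mathrm{Cl}(X)$ we may write $Z\sim -pK_X$ with $p\in\bZ_{>0}$, and $D':=D-\mult_Z D\cdot Z\sim_\bQ(1-p\,\mult_Z D)(-K_X)$ is effective; as $-K_X$ is ample this forces $\mult_Z D\le 1/p\le 1$, a contradiction. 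Hence $\codim_X Z\ge 2$ and $E$ is exceptional over $X$.

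The heart of the argument is to use that $|-K_X|$ is base point free to manufacture, out of the bad singularity of $(X,D)$, a movable boundary $M\sim_\bQ -K_X$ with $(X,M)$ \emph{not} log canonical, contradicting the hypothesis. Since $\codim_X Z\ge 2$, for suitably divisible $m\gg 0$ the linear system $\cM$ of all $G\in|-mK_X|$ with $\ord_E(G)\ge m\cdot\ord_E(D)$ is non-empty (it contains $mD$), and by base-point-freeness of $-K_X$ together with a Castelnuovo--Mumford-type regularity estimate, imposing such vanishing along $Z$ forces no new fixed divisorial component. Thus $\cM=F+\cM^{\mathrm{mob}}$ with $F$ effective, $\cM^{\mathrm{mob}}$ mobile, and $F\sim -m'K_X$ for some $0\le m'<m$. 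Because $-K_X$ generates $\mathrm{Cl}(X)$, $F$ is a genuine multiple of $-K_X$, and because $-K_X$ is base point free the mobile part of the anticanonical ring carries no asymptotic $\ord_E$-contribution; combining these with the $(n+1)$-fold gap $\ord_E(D)>(n+1)A_X(E)$ one checks that the general member $M_0$ of $\cM^{\mathrm{mob}}$ still satisfies $\ord_E(M_0)>m\,A_X(E)$. Then $M:=\tfrac1m M_0$ is a movable boundary with $M\sim_\bQ -K_X$ and $A_X(E)-\ord_E(M)<0$, so $(X,M)$ is not log canonical — contradiction. Hence $\alpha(X)\ge\frac{1}{n+1}$.

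The step I expect to be the main obstacle is exactly the control of the fixed part $F$, i.e.\ ensuring that the failure of log canonicity survives passage to the mobile part of $\cM$: a priori the bad valuation $E$ could be supported entirely on a rigid divisor, but then $D$ would contain that divisor with coefficient at most $1$ by $\mathrm{Cl}(X)=\bZ\cdot(-K_X)$, which together with the $(n+1)$-fold gap and base-point-freeness leaves the needed ``room'' in $\cM^{\mathrm{mob}}$; making this quantitative is where all three hypotheses genuinely enter. As an alternative route to the same point, one can first cut $X$ by general members of $|-K_X|$ through $Z$ to reduce to the case $\dim Z=0$, and then pass to a general complete-intersection curve $C\ni Z$, on which non-log-canonicity of $(X,D)$ gives $\mult_Z(D|_C)>n+1$ while $\deg(D|_C)=(-K_X)^n$; turning this excess multiplicity into a contradiction again amounts to spreading $C$ in its (base-point-free) family and controlling the resulting mobile system, i.e.\ to the same estimate on fixed parts.
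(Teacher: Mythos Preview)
Your overall strategy matches the paper's: assume $\lct(X;D)<\frac{1}{n+1}$, pick a divisor $E$ over $X$ with $\ord_E(D)>(n+1)A_X(E)$, rule out the case where the center of $E$ is a divisor using $\mathrm{Cl}(X)=\bZ\cdot(-K_X)$, and then build a movable boundary $M\sim_\bQ -K_X$ with $\ord_E(M)>A_X(E)$. The codimension-one reduction you give is essentially the paper's (the paper phrases it as ``$(X,D)$ is lc in codimension one''). The gap is in the second step, and you correctly flag it yourself: controlling the fixed part of your system $\cM=\{G\in|-mK_X|:\ord_E(G)\ge m\,\ord_E(D)\}$. The sentence ``by base-point-freeness of $-K_X$ together with a Castelnuovo--Mumford-type regularity estimate, imposing such vanishing along $Z$ forces no new fixed divisorial component'' is not a proof, and the subsequent claim that ``one checks that the general member $M_0$ of $\cM^{\mathrm{mob}}$ still satisfies $\ord_E(M_0)>mA_X(E)$'' has no argument behind it. Castelnuovo--Mumford regularity needs vanishing of higher cohomology, and the valuation ideal $\pi_*\cO_Y(-\lceil m\,\ord_E(D)\rceil E)$ cutting out your $\cM$ has no reason to satisfy any such vanishing. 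Your alternative route via cutting down to curves runs into the same wall: the local multiplicity bound $\mult_Z(D|_C)>n+1$ is not in tension with $\deg(D|_C)=(-K_X)^n$, and ``spreading $C$'' brings you back to the fixed-part problem.

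The paper resolves exactly this difficulty by replacing the naive valuation ideal with the \emph{multiplier ideal} $\cJ(X,(1-\epsilon)D)$. This is the missing idea. Since $(X,D)$ is lc in codimension one, $\cJ(X,(1-\epsilon)D)$ has cosupport of codimension $\ge 2$; by Nadel vanishing $H^i(X,\cJ(X,(1-\epsilon)D)\otimes\cO_X(-rK_X))=0$ for all $i>0$ and $r\ge 0$ (here base-point-freeness of $|-K_X|$ enters), so Castelnuovo--Mumford regularity makes $\cJ(X,(1-\epsilon)D)\otimes\cO_X(-nK_X)$ globally generated and the system $\cM=|\cJ(X,(1-\epsilon)D)\otimes\cO_X(-nK_X)|$ is genuinely movable---no fixed part to worry about. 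On the other hand the multiplier ideal carries the right order along $E$: from $\cJ(X,(1-\epsilon)D)\subseteq\pi_*\cO_Y\bigl((A_X(E)-1-\lfloor(1-\epsilon)\ord_E(D)\rfloor)E\bigr)$ and $\ord_E(D)>(n+1)A_X(E)$ one gets $\ord_E(\cM)\ge nA_X(E)+1$, so $M=\frac{1}{n}\cM\sim_\bQ -K_X$ violates log canonicity at $E$. The built-in ``$-A_X(E)$'' shift in the definition of the multiplier ideal is precisely what simultaneously guarantees the codimension-two cosupport and leaves enough $\ord_E$ to spare; your raw valuation condition $\ord_E(G)\ge m\,\ord_E(D)$ has neither feature.
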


Note that this is in line with the conjectural K-stability of such varieties, since by \cite[Theorem 3.5]{Fujita-delta}, the alpha invariant of a K-semistable Fano variety is always $\ge \frac{1}{n+1}$. We also remark that the assumptions about the index and base point freeness in the above theorem seem to be mild and they are satisfied by most known example of birationally superrigid varieties.

As another evidence towards a positive answer of Question \ref{que:alpha>1/2} and K-stability of birationally superrigid Fano varieties, we prove:

\begin{thm} \label{thm:separatedness}
Let $f:X\rightarrow C$, $g:Y\rightarrow C$ be two flat families of Fano varieties $($i.e. all geometric fibers are integral, normal and Fano$)$ over a smooth pointed curve $0\in C$. Assume that the central fibers $X_0=f^{-1}(0)$ and $Y_0=g^{-1}(0)$ are birationally superrigid and there exists an isomorphism $\rho:X\backslash X_0 \cong Y\backslash Y_0$ over the punctured curve $C\backslash 0$. Then $\rho$ induces an isomorphism $X\cong Y$ over $C$.
\end{thm}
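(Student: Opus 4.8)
The plan is to prove that the birational map $\rho\colon X\dashrightarrow Y$ over $C$ is an isomorphism, in two stages: first that $\rho$ contracts no divisor, and then that a small birational map over $C$ between such families is automatically an isomorphism. Since the assertion is local on $C$, I would first replace $C$ by the spectrum of a DVR; then $X_0$ is a principal divisor on $X$ (similarly $Y_0$ on $Y$), so $-K_{X/C}\sim -K_X$, and any $\cM_X\sim_\bQ -mK_{X/C}$ already satisfies $\cM_X\sim_\bQ -mK_X$; moreover by inversion of adjunction $X$ and $Y$ are klt near their central fibers, the fibers being $\bQ$-Fano. Next I would take a common log resolution $W$ of the graph of $\rho$, with birational morphisms $\pi\colon W\to X$ and $\sigma\colon W\to Y$; since $\rho$ is an isomorphism over $C\setminus 0$, every $\pi$-exceptional (resp. $\sigma$-exceptional) divisor on $W$ has center contained in $X_0$ (resp. $Y_0$), and the strict transform $\tX_0$ of $X_0$ is either equal to $\tY_0$ or $\sigma$-exceptional.

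The main construction: fix $m\gg 0$ and a general base-point-free movable linear subsystem $\cM_Y\subseteq |-mK_{Y/C}|$, chosen so that $\cM_Y|_{Y_0}$ is movable with $\cM_Y|_{Y_0}\sim_\bQ -mK_{Y_0}$. Put $\cM_X=\rho^{-1}_*\cM_Y$, a movable linear system with $\cM_X\sim_\bQ -mK_X$. Since $Y_0$ is birationally superrigid (hence of Picard number one, with $(Y_0,\cdot)$ canonical for every movable boundary), the pair $(Y_0,\tfrac1m\cM_Y|_{Y_0})$ is canonical; by inversion of adjunction this upgrades to the discrepancy bound $a\bigl(F;Y,\tfrac1m\cM_Y\bigr)\ge 0$ for every divisor $F$ over $Y$ whose center is a proper subvariety of $Y_0$ (the ``different'' vanishes because $Y_0$ is Cartier). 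Assuming for the moment that $\rho$ does not contract $X_0$ — so that $\cM_X|_{X_0}$ is a movable linear system with $\cM_X|_{X_0}\sim_\bQ -mK_{X_0}$ — the same applies on the $X$ side: $(X_0,\tfrac1m\cM_X|_{X_0})$ is canonical and $a\bigl(E;X,\tfrac1m\cM_X\bigr)\ge 0$ for every $E$ whose center is a proper subvariety of $X_0$.

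Now I would run the Noether--Fano comparison on $W$. Because $\cM_Y$ is base-point-free, $\sigma^*\cM_Y$ has no exceptional part, so it equals the common strict transform $\widetilde{\cM}$ of $\cM_X$ and $\cM_Y$, while $\pi^*\cM_X=\widetilde{\cM}+\sum_i e_iE_i$ with all $e_i\ge 0$. Using $K_X+\tfrac1m\cM_X\sim_\bQ 0$ and $K_Y+\tfrac1m\cM_Y\sim_\bQ 0$, both expressions for $K_W+\tfrac1m\widetilde{\cM}$ obtained by pulling back from $X$ and from $Y$ give
\[
\sum_i a\bigl(E_i;X,\tfrac1m\cM_X\bigr)E_i\;\sim_\bQ\;K_W+\tfrac1m\widetilde{\cM}\;\sim_\bQ\;\sum_j a\bigl(F_j;Y,\tfrac1m\cM_Y\bigr)F_j,
\]
two divisors supported on $W_0$. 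Feeding in the discrepancy bounds above and applying the negativity lemma to $\pi$ and to $\sigma$, I expect to conclude that this common class is zero, that no $\pi$- or $\sigma$-exceptional divisor occurs, and in particular that $\rho$ contracts no divisor and does not contract $X_0$ (the contraction of $X_0$ being ruled out first, by the same computation with $\cM_X$ replaced by a base-point-free system on $X$, which forces $\tX_0=\tY_0$). Finally, a small birational map over $C$ carrying the $f$-ample divisor $-K_{X/C}$ to the $g$-ample $-K_{Y/C}$ must be an isomorphism: on a normalized graph the two pullbacks agree as $\bQ$-Cartier divisors, so any curve contracted on one side has zero degree against an ample class on the other and is therefore contracted there too; hence $X\cong Y$ over $C$.

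I expect the main obstacle to be the middle step — converting canonicity of the restricted pairs on the central fibers, which is all that birational superrigidity directly provides, into the global conclusion on $W$ that $\rho$ contracts no divisor. This is essentially a one-parameter, relative form of the Noether--Fano--Iskovskikh criterion, and the delicate points are matching discrepancies over $X$ with center in $X_0$ against discrepancies computed on $X_0$ via adjunction, arranging that $\cM_Y$ is generic enough for its restriction to $Y_0$ and its pullback to $W$ to behave as claimed, and organizing the sign bookkeeping so that the negativity lemma can be applied on both sides simultaneously.
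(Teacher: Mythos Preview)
Your route is genuinely different from the paper's. The paper does not run a Noether--Fano/negativity-lemma argument on a common resolution at all. Instead it (i) notes that $K_X,K_Y$ are $\bQ$-Cartier because the central fibers are terminal (\cite{dFH-deform-Fano}); (ii) builds a \emph{single} movable boundary on each family by taking the pencil spanned by an extension of a general $D_1\in|-mK_{X_0}|$ and the strict transform of an extension of a general $D_2\in|-mK_{Y_0}|$, and symmetrically on $Y$; this guarantees that $M_X|_{X_0}$ and $M_Y|_{Y_0}$ are movable \emph{by construction}, so superrigidity gives $(X_0,M_X|_{X_0})$ klt and $(Y_0,M_Y|_{Y_0})$ lc; and (iii) concludes via the log Calabi--Yau unique-filling criterion of \cite{LWX}. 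So the paper trades your resolution computation for a black box, while your approach is more self-contained but longer.

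The gap in your sketch is precisely the point the paper's two-sided construction is designed to handle: you transport $\cM_Y$ to $X$ and assert that, once $\widetilde X_0=\widetilde Y_0$, the restriction $\cM_X|_{X_0}$ is movable. That does not follow. The base locus of $\cM_X$ is the $\pi$-image of the fixed part of $\pi^*\cM_X-\widetilde\cM$, which is $\pi$-exceptional and hence maps into $X_0$ with codimension $\ge 2$ in $X$---but that is codimension $\ge 1$ in $X_0$, so $\cM_X|_{X_0}$ may well acquire a fixed divisorial component, and then superrigidity says nothing about $(X_0,\tfrac1m\cM_X|_{X_0})$. The same issue recurs in your bootstrap step (base-point-free $\cM_X$ transported to $Y$) meant to rule out contraction of $X_0$. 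The fix is exactly the paper's trick: throw a general member of $|-mK_{X_0}|$ (extended to $X$) into the linear system on the $X$ side, so that movability of the restriction is automatic. A secondary point: your appeal to a ``canonical'' inversion of adjunction (canonicity of $(X_0,\tfrac1m\cM_X|_{X_0})$ forcing $a(E;X,\tfrac1m\cM_X)\ge 0$ for $E$ with center $\subsetneq X_0$) is the right shape of statement, but Kawakita's theorem is an lc/plt statement, and you should cite a precise reference or derive the canonical version you need. Finally, you should justify $\bQ$-Cartierness of $K_X$ and $K_Y$ before invoking discrepancies; ``inversion of adjunction'' does not supply this---the paper cites \cite{dFH-deform-Fano}.
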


In other words, the moduli space (if exist) of birationally superrigid Fano varieties is separated. Similar statement is also conjectured for families of K-polystable Fano varieties and our proof of Theorem \ref{thm:separatedness} is indeed inspired by the recent work \cite{BX} in the uniformly K-stable case. One should also note that if the answer to Question \ref{que:alpha>1/2} is positive, then Theorem \ref{thm:separatedness} follows immediately from \cite[Theorem 1.5]{C-alpha>1}.

\subsection*{Acknowledgement}

The authors would like to thank their advisor J\'anos Koll\'ar for constant support, encouragement and numerous inspiring conversations. The first author wishes to thank Harold Blum for useful discussion as well. Both authors also wish to thank Yuchen Liu for helpful discussions.

\section{Preliminary}

\subsection{Notation and conventions}

We work over the field $\bC$ of complex numbers. Unless otherwise specified, all varieties are assumed to be projective and normal and divisors are understood as $\bQ$-divisor. The notions of canonical, klt and log canonical (lc) singularities are defined in the sense of \cite[Definition 3.5]{Kol-sing-of-pairs}. A movable boundary is defined as an expression of the form $a\cM$ where $a\in\bQ$ and $\cM$ is a movable linear system. Its $\bQ$-linear equivalence class is defined in an evident way. If $M=a\cM$ is a movable boundary on $X$, we say that $(X,M)$ is klt (resp. canonical, lc) if for $k\gg 0$ and for general members $D_1,\cdots,D_k$ of the linear system $\cM$, the pair $(X,M_k)$ (where $M_k=\frac{a}{k}\sum_{i=1}^k D_i$) is klt (resp. canonical, lc) in the usual sense. The log canonical threshold \cite[Definition 8.1]{Kol-sing-of-pairs} of a divisor $D$ on $X$ is denoted by $\lct(X;D)$.

\subsection{K-stability}

We refer to \cite{Tian-K-stability-defn, Don-K-stability-defn} for the original definition of K-stability using test configurations. In this paper we use the following equivalent valuative criterion.

\begin{defn}[{\cite[Definition 1.1]{Fujita-valuative-criterion}}] \label{defn:threshold and beta}
Let $X$ be a $\bQ$-Fano variety of dimension $n$. Let $F$ be a prime divisor over $X$, i.e., there exists a projective birational morphism $\pi: Y\to X$ with $Y$ normal such that $F$ is a prime divisor on $Y$.
    \begin{enumerate}
        \item For any $x\ge 0$, we define $\vol_X(-K_X-xF):=\vol_Y(-\pi^*K_X-xF)$.
        \item The \emph{pseudo-effective threshold} $\tau(F)$ of $F$ with respects to $-K_X$ is defined as
        \[\tau(F):=\sup\{\tau>0\,|\, \vol_X(-K_X-\tau F)>0\}.\]
        \item Let $A_X(F)$ be the log discrepancy of $F$ with respect to $X$. We set
        \[\beta(F):=A_X(F)\cdot((-K_X)^n)-\int_0^{\infty}\vol_X(-K_X-xF)\mathrm{d}x.\]
        \item $F$ is said to be \emph{dreamy} if the graded algebra
        \[\bigoplus_{k,j\in\bZ_{\geq 0}}H^0(Y, -kr\pi^*K_X-jF)\]
        is finitely generated for some (hence, for any) $r\in\bZ{>0}$ with $rK_X$ Cartier. 
    \end{enumerate}
\end{defn}

\begin{thm}[{\cite[Theorems 1.3 and 1.4]{Fujita-valuative-criterion} and \cite[Theorem 3.7]{Li-equivariant-minimize}}] \label{thm:beta-criterion}
Let $X$ be a $\bQ$-Fano variety. Then $X$ is K-stable $($resp.\ K-semistable$)$ if and only if $\beta(F)>0$ $($resp.\ $\beta(F)\geq 0)$ holds for any dreamy prime divisor $F$ over $X$. 
\end{thm}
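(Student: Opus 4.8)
The plan is to match two collections: the test configurations of $(X,-rK_X)$ together with their Donaldson--Futaki invariants on one side, and the divisorial valuations over $X$ together with the number $\beta$ on the other. First I would cut the test configuration side down to a manageable class, and then set up a dictionary between that class and dreamy prime divisors under which $\mathrm{DF}$ becomes (a positive multiple of) $\beta$.

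The first step is to recall the intersection-theoretic formula for the Donaldson--Futaki invariant (Wang, Odaka, Boucksom--Hisamoto--Jonsson): for a normal test configuration $(\mathcal{X},\mathcal{L})$ of $(X,-rK_X)$, with compactification $\bar{\mathcal{X}}\to\bP^1$ and $\bar{\mathcal{L}}$ the natural extension of $\mathcal{L}$, $\mathrm{DF}(\mathcal{X},\mathcal{L})$ is a fixed positive linear combination of the intersection numbers $(\bar{\mathcal{L}}^{n+1})$ and $(\bar{\mathcal{L}}^{n}\cdot K_{\bar{\mathcal{X}}/\bP^1})$. Two standard reductions then apply. Replacing $\mathcal{L}$ by the relative anticanonical class $-K_{\mathcal{X}/\bP^1}$ does not increase $\mathrm{DF}$, so one may assume $\mathcal{X}$ is anticanonically polarized; and running a relative minimal model program for $(\mathcal{X},(\mathcal{X}_0)_{\mathrm{red}})$ over $\bP^1$ --- the Li--Xu argument --- produces a \emph{special} test configuration (central fibre $\mathcal{X}_0$ a $\bQ$-Fano variety) without increasing $\mathrm{DF}$, strictly unless the original configuration was essentially a product. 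Hence it suffices to verify $\mathrm{DF}\ge 0$ (resp.\ $>0$ for non-product configurations) on special test configurations.

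The second step is the dictionary. To a special test configuration $(\mathcal{X},\mathcal{L})$ attach the valuation $\mathrm{ord}_{\mathcal{X}_0}$ on $\bC(\mathcal{X})=\bC(X)(t)$; its restriction to $\bC(X)$ is either trivial (product case) or a divisorial valuation $c\cdot\mathrm{ord}_F$ for a prime divisor $F$ over $X$ and some $c\in\bQ_{>0}$. The key identity, and the analytic heart of the argument, is
\[
\mathrm{DF}(\mathcal{X},\mathcal{L})\cdot\bigl((-K_X)^n\bigr)\;=\;\beta(F)\;=\;A_X(F)\cdot\bigl((-K_X)^n\bigr)-\int_0^{\infty}\vol_X(-K_X-xF)\,\mathrm{d}x,
\]
where one passes from $c\cdot\mathrm{ord}_F$ to $F$ using the homogeneity $\beta(c\cdot\mathrm{ord}_F)=c\,\beta(F)$. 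To prove it I would work with the $\bZ$-filtration $\mathcal{F}^{\lambda}_F R_m=\{s\in H^0(X,-mrK_X):\mathrm{ord}_F(s)\ge\lambda\}$ of the anticanonical ring $R=\bigoplus_m R_m$: the term $(\bar{\mathcal{L}}^{n+1})$ is the asymptotic mass of this filtration, which by an Okounkov-body computation is a multiple of $\int_0^{\infty}\vol_X(-K_X-xF)\,\mathrm{d}x$, while the relative canonical term $(\bar{\mathcal{L}}^{n}\cdot K_{\bar{\mathcal{X}}/\bP^1})$ contributes the log discrepancy $A_X(F)$ by adjunction along $\mathcal{X}_0$. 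Conversely, given a dreamy prime divisor $F$, the finite generation packaged into ``dreamy'' is precisely what lets one form $\mathcal{X}_F:=\mathrm{Proj}_{\bA^1}$ of the Rees-type algebra $\bigoplus_{m,\lambda}(\mathcal{F}^{\lambda}_F R_m)\,t^{-\lambda}$, a genuine integral test configuration to which the same identity applies; K-(semi)stability then forces $\beta(F)>0$ (resp.\ $\ge 0$). One checks that the $F$'s arising from special test configurations are automatically dreamy, so no generality is lost by restricting to them; for the K-semistable statement, where the Li--Xu reduction is not directly available, Li's equivariant minimization \cite[Theorem 3.7]{Li-equivariant-minimize} supplies the needed approximation argument.

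I expect the main obstacle to be the $\mathrm{DF}=\beta$ identity together with its two supporting reductions. The computation of $\mathrm{DF}$ as intersection numbers on the normalized compactified family, the control of the correction terms when $\mathcal{X}_0$ is non-reduced or $\mathcal{X}$ is non-normal (Odaka's normalization inequality), and the Li--Xu relative MMP reduction to special test configurations each require genuine birational geometry rather than formal manipulation. A further technical point is matching the asymptotic mass of $\{\mathcal{F}^{\lambda}_F R_m\}$ with $\int_0^{\infty}\vol_X(-K_X-xF)\,\mathrm{d}x$ uniformly in $m$; this needs continuity and piecewise differentiability of $x\mapsto\vol_X(-K_X-xF)$ and convergence of the associated Riemann sums, which is exactly where Okounkov-body techniques and the finite generation built into dreaminess enter.
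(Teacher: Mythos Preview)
The paper does not prove this theorem at all: it is quoted verbatim from \cite{Fujita-valuative-criterion} and \cite{Li-equivariant-minimize} and used as a black box in the proof of Theorem~\ref{thm:main}. So there is no ``paper's own proof'' to compare against; you have written a sketch of the argument behind the cited references, which is more than what is required here.

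That said, your outline tracks the Fujita--Li strategy reasonably well (intersection-theoretic $\mathrm{DF}$, Li--Xu special degeneration, the Rees-algebra construction from a dreamy $F$, and the identification $\mathrm{DF}=\beta$ up to a positive constant). Two small points are off. First, the sentence ``replacing $\mathcal{L}$ by the relative anticanonical class $-K_{\mathcal{X}/\bP^1}$ does not increase $\mathrm{DF}$'' compresses several steps of the Li--Xu MMP into a statement that is not literally true as written: $-K_{\mathcal{X}/\bP^1}$ need not be relatively ample on the original $\mathcal{X}$, and one must run the MMP and base-change before arriving at an anticanonically polarized special test configuration. Second, the parenthetical ``for the K-semistable statement, where the Li--Xu reduction is not directly available'' inverts the logic: Li--Xu handles K-semistability perfectly well; the subtlety Li's paper addresses is rather on the valuative side (extending $\beta$ to all valuations, not just dreamy divisors, and the equivariant/normalized-volume viewpoint). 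Neither point is fatal to the sketch, but if you intend to flesh this out you should separate the two directions cleanly: (i) dreamy $F \Rightarrow$ test configuration with $\mathrm{DF}$ proportional to $\beta(F)$, and (ii) arbitrary test configuration $\Rightarrow$ special test configuration $\Rightarrow$ dreamy $F$ with $\beta(F)$ controlling $\mathrm{DF}$.
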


\subsection{Birational superrigidity}

A Fano variety $X$ is said to be birationally superrigid if it has terminal singularities, is $\bQ$-factorial of Picard number one and every birational map $f:X\dashrightarrow Y$ from $X$ to a Mori fiber space is an isomorphism (see e.g. \cite[Definition 1.25]{Noether-Fano}). We have the following equivalent characterization of birational superrigidity.

\begin{thm}[{\cite[Theorem 1.26]{Noether-Fano}}] \label{thm:noether-fano}
Let $X$ be a Fano variety. Then it is birationally superrigid if and only if it has $\bQ$-factorial terminal singularities, Picard number one, and for every movable boundary $M\sim_\bQ -K_X$ on $X$, the pair $(X,M)$ has canonical singularities.
\end{thm}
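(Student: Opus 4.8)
The requirements that $X$ be $\bQ$-factorial, terminal and of Picard number one are part of the very definition of birational superrigidity, so the substance of the statement is the equivalence between superrigidity and the canonicity of $(X,M)$ for all movable boundaries $M\sim_\bQ-K_X$; I would establish the two implications by the Noether--Fano method.

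\emph{Canonicity $\Rightarrow$ superrigidity.} Let $f\colon X\dashrightarrow Y$ be a birational map to a Mori fibre space $\sigma\colon Y\to S$. For $r\gg0$ and a suitably positive divisor $H$ on $S$ (with $H=0$ when $S$ is a point), the system $\cM_Y:=|-rK_Y+\sigma^*H|$ is base point free, hence movable, and since $(Y,0)$ is terminal the pair $(Y,\frac1r\cM_Y)$ is canonical for $r$ large. Put $\cM:=f^{-1}_*\cM_Y$; as $\rho(X)=1$ there is a unique $n\in\bQ_{>0}$ with $\cM\sim_\bQ n(-K_X)$, so $M:=\frac1n\cM\sim_\bQ-K_X$ is a movable boundary and $(X,M)$ is canonical by hypothesis. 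Take a common resolution $p\colon W\to X$, $q\colon W\to Y$ and write $p^*\cM=\cM_W+\sum_i m_iE_i$, $q^*\cM_Y=\cM_W+\sum_j m'_jF_j$, $K_W=p^*K_X+\sum_i a_iE_i=q^*K_Y+\sum_j b_jF_j$; canonicity on the two sides translates into $m_i\le na_i$ and $m'_j\le rb_j$. Comparing the two resulting expressions for the class of $\cM_W$ and intersecting with a general complete intersection curve in a general fibre of $\sigma$ — which meets no $q$-exceptional divisor and no component of $q^*\sigma^*H$ — forces $n\le r$; pushing the effective $p$-exceptional divisors $K_W+\frac1n\cM_W$ and $K_W+\frac1r\cM_W$ down to $X$ and invoking the Negativity Lemma then upgrades this to $n=r$, to the $p$-exceptionality of $q^*\sigma^*H$ (so $S$ is a point), and to the crepancy of $f$ for the log Calabi--Yau pairs $(X,M)$ and $(Y,\frac1r\cM_Y)$. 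A crepant birational map between $\bQ$-factorial klt minimal models is an isomorphism in codimension one, and since $\rho(X)=1$ there is no room for a nontrivial flop, so $f$ is an isomorphism. Hence $X$ is birationally superrigid.

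\emph{Superrigidity $\Rightarrow$ canonicity.} I argue contrapositively, untwisting a maximal singularity. Suppose a movable boundary $M=\frac1n\cM\sim_\bQ-K_X$ makes $(X,M)$ non-canonical and set $c:=\mathrm{ct}(X;\cM)\in(0,\frac1n)$; then $(X,c\cM)$ is canonical, $K_X+c\cM$ is anti-ample — in particular not pseudo-effective — and there is a prime divisor $E$ over $X$ with $a(E;X,c\cM)=0$; since $\cM$ is movable and $X$ is terminal, $E$ is necessarily exceptional over $X$, so $a(E;X)>0$ and $\mathrm{mult}_E\cM>n\,a(E;X)>0$. Using the minimal model program, extract $E$ by a $\bQ$-factorial divisorial contraction $g\colon X_1\to X$; because $a(E;X,c\cM)=0$ this extraction is automatically crepant for $(X,c\cM)$, so $(X_1,c\cM_1)$ is canonical, the $g$-exceptional ray is $(K_{X_1}+c\cM_1)$-trivial, and $K_{X_1}+c\cM_1=g^*(K_X+c\cM)$ is still not pseudo-effective. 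A $(K_{X_1}+c\cM_1)$-MMP therefore never contracts or flips the $g$-exceptional ray and terminates with a Mori fibre space $Z\to S'$ on which $E$ survives as a divisor. Were the induced birational map $X\dashrightarrow Z$ an isomorphism, the valuation $E$ would be the valuation of a prime divisor on $X$, contradicting $a(E;X)>0$. Hence $X$ admits a non-isomorphic birational map to a Mori fibre space, i.e.\ it is not birationally superrigid.

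I expect the main obstacle to lie on both sides of the Noether--Fano analysis. In the first implication, promoting the single curve inequality $n\le r$ to the full statement ($n=r$, $S$ a point, $f$ crepant) via the Negativity Lemma and $\rho(X)=1$ requires care, as does the choice of the auxiliary system $\cM_Y$ ensuring that $(Y,\frac1r\cM_Y)$ and $(Y,f_*M)$ are klt and canonical. In the converse, the genuinely delicate points are that the extraction $X_1$ can be taken terminal and $\bQ$-factorial and that $E$ is not contracted along the $(K_{X_1}+c\cM_1)$-MMP; both rest on higher-dimensional minimal model theory.
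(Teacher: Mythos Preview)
The paper does not prove this statement at all: Theorem~\ref{thm:noether-fano} is quoted as a background result from \cite[Theorem 1.26]{Noether-Fano} in the Preliminary section, with no proof supplied. So there is no ``paper's own proof'' to compare against; your proposal is an attempt to reprove a cited theorem.

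That said, your sketch follows the standard Noether--Fano/Sarkisov approach that underlies the cited reference, and the overall architecture of both implications is correct. A few technical points deserve tightening. In the forward direction, the passage from the single curve inequality $n\le r$ to the full conclusion is not really achieved by ``pushing the effective $p$-exceptional divisors down to $X$ and invoking the Negativity Lemma'' in the way you describe; the usual argument compares $p^*(K_X+M)\equiv 0$ with $q^*(K_Y+\tfrac{1}{r}\cM_Y)+\tfrac{1}{r}q^*\sigma^*H$ and uses the Negativity Lemma on the \emph{difference} of the two exceptional parts, together with the nefness of $-K_Y$ on fibres, to force $n=r$, $q^*\sigma^*H$ to be $p$-exceptional, and the discrepancies to match. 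Your version conflates two steps. In the converse direction, you correctly flag the subtle inputs (existence of a $\bQ$-factorial terminal extraction of the maximal centre, and that $E$ survives the $(K_{X_1}+c\cM_1)$-MMP); these are exactly where the proof relies on BCHM-type results, and your outline is honest about that.
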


\section{Proofs}

In this section we prove the results stated in the introduction.

\begin{proof}[Proof of Theorem \ref{thm:main}]
The proof strategy is similar to those of \cite[Proposition 2.1]{Fujita-plt-blowup}. By assumption we have $\lct(X;D)\ge \frac{1}{2}$ (resp. $>\frac{1}{2}$) for every effective divisor $D\sim_\bQ -K_X$. For simplicity we only prove the K-semistability, since the K-stability part is almost identical. As such, we assume $\lct(X;D)\ge \frac{1}{2}$ in the rest of the proof. Let $F$ be a dreamy divisor over $X$ and let $\tau=\tau(F)$. Let $\pi:Y\to X$ be a projective birational morphism such that $F$ is a prime divisor on $Y$ and let 
\[b=\frac{1}{((-K_X)^n)}\int_0^\tau \vol_X(-K_X-xF) \mathrm{d}x.
\]
By Theorem \ref{thm:beta-criterion}, it suffices to show that $b\le A=A_X(F)$. 

Suppose that this is not the case, i.e. $b>A$. As in the proof of \cite[Proposition 2.1]{Fujita-plt-blowup}, we have
\begin{equation} \label{eq:b}
    \int_0^{\tau} (x-b)\cdot \vol_{Y|F}(-\pi^*K_X-xF) \mathrm{d}x = 0
\end{equation}
where $\vol_{Y|F}$ denotes the restricted volume of a divisor to $F$ (see \cite{ELMNP-restricted-volume}). Since $X$ has Picard number one, every divisor on $X$ is linearly equivalent to a multiple of $-K_X$, hence by our assumption that $(X,M)$ is log canonical for every movable boundary $M$, we see that there exists at most one irreducible divisor $D\sim_\bQ -K_X$ such that $\ord_F(D)>A$. It then follows that $\ord_F(D)=\tau$ by the definition of $\tau(F)$ and moreover, as \[-\pi^*K_X-xF=\frac{\tau-x}{\tau-A}(-\pi^*K_X-AF)+\frac{x-A}{\tau-A}(-\pi^*K_X-\tau F),
\]
$D$ appears with multiplicity at least $\frac{x-A}{\tau-A}$ in the stable base locus of $-\pi^*K_X-xF$ for every $x>A$ and we obtain
\begin{equation} \label{eq:x>A}
    \vol_{Y|F}(-\pi^*K_X-xF)=\left(\frac{\tau-x}{\tau-A}\right)^{n-1}\vol_{Y|F}(-\pi^*K_X-AF)
\end{equation}
when $x>A$. On the other hand, by the log-concavity of restricted volume \cite[Theorem A]{ELMNP-restricted-volume}, we have
\begin{equation} \label{eq:x<=A}
    \vol_{Y|F}(-\pi^*K_X-xF)\ge \left(\frac{x}{A}\right)^{n-1}\vol_{Y|F}(-\pi^*K_X-AF)
\end{equation}
whenever $x\in[0,A]$. Since $b>A$, combining \eqref{eq:b}, \eqref{eq:x>A} and \eqref{eq:x<=A} we get the inequality
\begin{equation} \label{eq:integral}
    0\le \int_0^A (x-b)\left(\frac{x}{A}\right)^{n-1} \mathrm{d}x + \int_A^{\tau} (x-b) \left(\frac{\tau-x}{\tau-A}\right)^{n-1} \mathrm{d}x, 
\end{equation}
which is equivalent to
\begin{equation} \label{eq:tau,A,b}
    \frac{\tau-2A}{n(n+1)}+\frac{A-b}{n}\ge 0.
\end{equation}
As $b>A$, we have $\ord_F(D)=\tau>2A$, which implies $\lct(X;D)< \frac{1}{2}$, contradicting our assumption.
\end{proof}

It is not hard to characterize the equality case from the above proof.

\begin{cor} \label{cor:equality}
Let $X$ be as in Theorem \ref{thm:main}. Assume that $X$ is strictly K-semistable, then there exists a dreamy prime divisor $F$ over $X$, a movable boundary $M\sim_\bQ -K_X$ and an effective divisor $D\sim_\bQ -K_X$ such that $F$ is a log canonical place of $(X,M)$ and $(X,\frac{1}{2}D)$. In particular, if $X$ is birationally superrigid and $\alpha(X)\ge \frac{1}{2}$, then $X$ is K-stable.
\end{cor}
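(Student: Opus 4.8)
The plan is to revisit the proof of Theorem \ref{thm:main} and extract the conditions under which every inequality used there becomes an equality. Suppose $X$ is strictly K-semistable. Then by Theorem \ref{thm:beta-criterion} there is a dreamy prime divisor $F$ over $X$ with $\beta(F)=0$, which in the notation of the preceding proof means $b=A$. Tracing through the argument, the chain of inequalities \eqref{eq:x<=A}, \eqref{eq:integral}, \eqref{eq:tau,A,b} must all be equalities (with $b=A$), so \eqref{eq:tau,A,b} forces $\tau=2A$. Thus the unique divisor $D\sim_\bQ -K_X$ with $\ord_F(D)>A$ in fact has $\ord_F(D)=\tau=2A$, whence $\lct(X;D)=\frac{1}{2}$ and $F$ computes this log canonical threshold; equivalently $F$ is a log canonical place of $(X,\frac12 D)$. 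I would also need to confirm such a $D$ exists at all in the equality case: if no divisor had $\ord_F(D)>A$, then by the interpolation formula for $-\pi^*K_X-xF$ one would get $\vol_{Y|F}(-\pi^*K_X-xF)>0$ up to $x=\tau$ with no forced drop, and the strict log-concavity bound would push the integral \eqref{eq:b}-analysis to give $b<A$, contradicting $b=A$; so $D$ exists and $\tau=\ord_F(D)$.

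Next I would produce the movable boundary $M$. Here the key point is that $b=A$ together with the equality in \eqref{eq:x<=A} on $[0,A]$ means $\vol_{Y|F}(-\pi^*K_X-xF)=(x/A)^{n-1}\vol_{Y|F}(-\pi^*K_X-AF)$, and in particular the restricted volume is positive for $x$ slightly less than $A$, so $-\pi^*K_X-AF$ is big on $F$ — more importantly, $-\pi^*K_X - xF$ remains big (as a divisor on $Y$) for $x$ up to and including values near $A$, and one checks $A \le \tau(F)$ strictly is not needed; what matters is that the linear system $|-mr\pi^*K_X - mAF|$ for suitable divisible $m$ defines, after pushforward to $X$, a movable linear system $\cM$ with $\frac{1}{mr}\cM \sim_\bQ -K_X$ and $\ord_F$ of a general member at least $A$. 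Since $(X, \frac{1}{mr}\cM) = (X,M)$ is log canonical by hypothesis (birational superrigidity, or the weaker assumption), and $A_X(F) = A = \ord_F(M) \le A_X(F)$ forces $\ord_F(M) = A_X(F)$ exactly, $F$ is a log canonical place of $(X,M)$. I would use dreaminess of $F$ to guarantee the relevant section ring is finitely generated so that these linear systems behave well and $F$ genuinely appears with the expected multiplicity in the base locus.

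The main obstacle I expect is the bookkeeping around the movable boundary $M$: showing that the pushforward of a general member of $|-mr\pi^*K_X - mAF|$ really is a movable divisor on $X$ with $\ord_F$ equal to exactly $A$ (not just $\ge A$), and that this is compatible with $(X,M)$ being log canonical forcing $F$ to be an lc place. One has to be slightly careful because $F$ need not be a divisor on $X$ itself, so ``movable boundary'' here is in the sense of the paper's conventions (a multiple of a movable linear system, tested on general members); the equality $\ord_F(M)=A_X(F)$ then comes from combining $\ord_F$ of a general member $\ge A$ with the log canonicity inequality $A_X(F) - \ord_F(M) \ge 0$, which holds since $(X,M)$ is log canonical and in fact canonical when $X$ is birationally superrigid — giving $\ord_F(M) \le A_X(F) = A$, hence equality.

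Finally, for the last sentence of the corollary: if $X$ is birationally superrigid and $\alpha(X) \ge \frac12$, suppose for contradiction $X$ is not K-stable. It is K-semistable by Theorem \ref{thm:main}, so it is strictly K-semistable, and the above produces an effective $D \sim_\bQ -K_X$ with $(X,\frac12 D)$ not klt, i.e. $\lct(X;D) = \frac12$. But by Theorem \ref{thm:noether-fano} birational superrigidity forces $(X,M)$ to be \emph{canonical} for every movable boundary, and more to the point the existence of $D$ with $\lct(X;D)=\frac12$ combined with $F$ being a common lc place of $(X,M)$ and $(X,\frac12 D)$ contradicts canonicity of $(X,M)$: canonicity means $A_X(F) - \ord_F(M) > 0$ for every exceptional $F$, yet we showed this quantity is $0$. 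Hence no such $F$ exists and $X$ is K-stable. (Alternatively, and more simply, if $\alpha(X) \ge \frac12$ is an equality we get $\lct(X;D) = \frac12$ is attained, which is fine, but canonicity of $(X,M)$ rules out $F$ being an lc place of a movable boundary; if $\alpha(X) > \frac12$ there is no $D$ with $\lct(X;D) = \frac12$ at all. Either way the strictly-semistable scenario is impossible.)
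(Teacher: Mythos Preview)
Your derivation of $\tau=2A$ and the existence of the divisor $D$ is correct and essentially matches the paper (setting $b=A$ in \eqref{eq:tau,A,b} gives $\tau\ge 2A$, while $\alpha(X)\ge\tfrac12$ gives $\tau\le 2A$). The ``in particular'' clause is also handled correctly: birational superrigidity upgrades log canonical to canonical for movable boundaries, which is incompatible with $F$ being an lc place.

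The genuine gap is in producing the movable boundary $M$. You take $\cM$ to be the pushforward of $|{-}mr\pi^*K_X - mrAF|$ and then argue that $(X,M)$ is log canonical by hypothesis, forcing $\ord_F(M)\le A$. But the hypothesis only applies to \emph{movable} boundaries, and you never establish that $\cM$ has no fixed component; in particular, nothing you have said rules out $D$ itself sitting in the base locus of $\cM$ with positive coefficient. Positivity of the restricted volume $\vol_{Y|F}(-\pi^*K_X-AF)$ does not by itself imply movability on $X$, and invoking dreaminess for ``good behaviour'' is not a substitute for this step. So the argument for $\ord_F(M)=A$ is circular as written.

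The paper resolves this by a different device: it introduces the \emph{movable threshold} $\eta=\eta(F)$, the largest $c$ with an effective $D_1\sim_\bQ -K_X$ satisfying $\ord_F(D_1)=c$ and $D\not\subseteq\mathrm{Supp}(D_1)$ (attained by dreaminess). One always has $\eta\le A$, and re-running \eqref{eq:x>A}--\eqref{eq:tau,A,b} with $\eta$ in place of $A$ yields $(\tau-2A)+(n-1)(\eta-A)\ge 0$; combined with $\tau\le 2A$ and $\eta\le A$ this forces $\eta=A$ and $\tau=2A$ simultaneously. Then $M$ is the (rescaled) pencil spanned by $D$ and $D_1$, which is movable since $D$ is irreducible and not in $\mathrm{Supp}(D_1)$. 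Your equality analysis of \eqref{eq:x<=A} could in principle be pushed to recover $\eta=A$ as well (comparing it with the $\eta$-version of \eqref{eq:x>A} on the overlap forces $\eta=A$), but you would need to carry this out explicitly; as it stands, the movability claim is unjustified.
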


\begin{proof}
We keep the notation from the above proof. Let $\eta=\eta(F)$ be movable threshold of $-K_X$ with respect to $F$, i.e. the supremum of $c\ge 0$ such that there exists an effective divisor $D_0\sim_\bQ -K_X$ with $\ord_F(D_0)=c$ whose support does not contain $D$ (see e.g.  \cite[Definition 4.1]{Z-cpi}). Since $F$ is dreamy, this is indeed a maximum and there exists $D_1\sim_\bQ -K_X$ such that $\ord_F(D_1)=\eta$ (one can simply take $D_1$ to be the divisor corresponding to a generator of $\bigoplus_{k,j\in\bZ_{\geq 0}}H^0(Y, -kr\pi^*K_X-jF)$ with largest slope $\frac{j}{kr}$ among those that does not vanish on $D$). 

Suppose that $X$ is strictly K-semistable and choose $F$ to be a dreamy divisor over $X$ such that $\beta(F)=0$, then we have $b=A\ge \eta$. In this case by the same proof as above the (in)equalities \eqref{eq:x>A}, \eqref{eq:x<=A}, \eqref{eq:integral} and \eqref{eq:tau,A,b} holds true with $\eta$ in place of $A$ and we have
\[\frac{\tau-2\eta}{n(n+1)}+\frac{\eta - b}{n}\ge 0,\]
or $(\tau-2A)+(n-1)(\eta - A)\ge 0$ (note that $b=A$). But by assumption we have $\tau\le 2A$ and $\eta\le A$, hence this is only possible when $\eta=A$ and $\tau=2A$. Taking $M$ to be the linear system generated by a sufficiently divisible multiple of $D$ and $D_1$ (and then rescale so that $M\sim_\bQ -K_X$) finishes the proof.
\end{proof}

\begin{proof}[Proof of Corollary \ref{cor:index two}]
Let $S\subseteq U$ be the set of \emph{regular} hypersurfaces as defined in \cite[\S 0.2]{Puk-sing-index-two}. By \cite[Theorem 2]{Puk-sing-index-two}, $S$ is non-empty and the complement of $S$ has codimension at least $\frac{1}{2}(n-11)(n-10)-10$. Therefore, it suffices to show that every hypersurface in the set $S$ is K-stable. Let $X$ be such a hypersurface.

Let $H$ be the hyperplane class and let $D\sim_\bQ H\sim_\bQ -\frac{1}{2}K_X$ be an effective divisor. By \cite[Lemma 3.1]{C-hypersurface-lct}, $(X,D)$ is lc and indeed by \cite[Proposition 5]{Puk-mult-bound}, we have $\mult_x D\le 1$ for all but finitely many $x\in X$, hence by \cite[3.14.1]{Kol-sing-of-pairs}, $(X,D)$ has canonical singularities outside a finite number of points. It follows that every lc center of $(X,D)$ is either a divisor on $X$ or an isolated point.

Let $M\sim_\bQ -K_X$ be a movable boundary. By \cite[Proposition 5]{Puk-mult-bound} again, we have $\mult_x (M^2) \le 4$ outside a subset $Z\subseteq X$ of dimension at most $1$, hence by \cite{dFEM-mult-and-lct}, $(X,M)$ is lc outside $Z$. On the other hand by the main result of \cite{Puk-sing-index-two}, the only possible center of maximal singularities of $(X,M)$ is a linear section of $X$ codimension $2$. It follows that $(X,M)$ is lc and every lc center of $(X,M)$ is a linear section of codimension two.

Hence $X$ is K-semistable by Theorem \ref{thm:main}. Suppose that it is strictly K-semistable, then by Corollary \ref{cor:equality} there exists a movable boundary $M\sim_\bQ -K_X$ and an effective divisor $D\sim_\bQ H$ such that $(X,M)$ and $(X,D)$ has a common lc center. But by the previous analysis, this is impossible as the lc centers of $(X,M)$ and $(X,D)$ always have different dimension. Therefore $X$ is K-stable and the proof is complete.
\end{proof}

\begin{proof}[Proof of Theorem \ref{thm:alpha>=1/(n+1)}]
It suffices to show that $\lct(X;D)\ge\frac{1}{n+1}$ for every $D\sim_\bQ -K_X$. We may assume that $D$ is irreducible. Since $\mathrm{Cl}(X)$ is generated by $-K_X$, we have $\mult_{\eta}D\le 1$ where $\eta$ is the generic point of $D$. It follows that $(X,D)$ is log canonical in codimension one \cite[(3.14.1)]{Kol-sing-of-pairs}, hence the multiplier ideal $\cJ(X,(1-\epsilon)D)$ (where $0<\epsilon\ll 1$) defines a subscheme of codimension at least two. By Nadel vanishing,
\[H^i(X,\cJ(X,(1-\epsilon)D)\otimes \cO_X(-rK_X))=0
\]
for every $i>0$ and $r\ge0$, therefore by Castelnuovo-Mumford regularity (see e.g. \cite[\S 1.8]{positivity-1}), the sheaf $\cJ(X,(1-\epsilon)D)\otimes \cO_X(-nK_X)$ is generated by its global sections and we get a movable linear system 
\[\cM=|\cJ(X,(1-\epsilon)D)\otimes \cO_X(-nK_X)|.
\]
Suppose that $\lct(X;D)<\frac{1}{n+1}$, let $E$ be an exceptional divisor over $X$ that computes it, let $A=A_X(E)$ and let $\pi:Y\rightarrow X$ be a projective birational morphism such that the center of $E$ on $Y$ is a divisor, then $A\in \bZ$ (since $-K_X$ is Cartier by assumption) and we have $d=\ord_E(D)>(n+1)A$ and $\cJ(X,(1-\epsilon)D)\subseteq\pi_*\cO_Y((A-1-\lfloor (1-\epsilon)d \rfloor)E)\subseteq \pi_*\cO_Y(-(nA+1)E)$. It follows that $\ord_E(\cM)\ge nA+1$, hence for the movable boundary $M=\frac{1}{n}\cM\sim_\bQ -K_X$ we have $\ord_E(M)>A$ and $(X,M)$ is not log canonical, violating our assumption. Thus $\lct(X;D)\ge\frac{1}{n+1}$ and we are done.
\end{proof}

Finally we prove the separatedness statement (Theorem \ref{thm:separatedness}). For this we recall the following criterion:

\begin{lem} \label{lem:unique filling}
Let $f:X\rightarrow C$, $g:Y\rightarrow C$ be flat families of $\bQ$-Fano varieties over a smooth pointed curve $0\in C$ with central fibers $X_0$ and $Y_0$. Assume that $K_X$ and $K_Y$ are $\bQ$-Cartier and let $D_X\sim_\bQ -K_X$, $D_Y\sim_\bQ -K_Y$ be effective divisors not containing $X_0$ or $Y_0$. Assume that there exists an isomorphism 
\[\rho:(X,D_X)\times_C C^\circ \cong (Y,D_Y)\times_C C^\circ\]
over $C^\circ=C\backslash 0$, that $(X_0,D_X|_{X_0})$ is klt and $(Y_0,D_Y|_{Y_0})$ is lc. Then $\rho$ extends to an isomorphism $(X,D_X)\cong (Y,D_Y)$.
\end{lem}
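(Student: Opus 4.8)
The plan is to realise $X$ and $Y$ as relative anticanonical models of a common resolution of $\rho$ and to compare them through log discrepancies, in the spirit of \cite{BX}. Since the fibres are $\bQ$-Fano, $-K_X$ is $f$-ample and $-K_Y$ is $g$-ample; since the central fibres are integral, $X_0=f^*(0)$ and $Y_0=g^*(0)$ are reduced Cartier prime divisors, and $K_X+D_X\sim_{\bQ}0$, $K_Y+D_Y\sim_{\bQ}0$ by hypothesis. As $(X_0,D_X|_{X_0})$ is klt, inversion of adjunction gives that $(X,X_0+D_X)$ is plt in a neighbourhood of $X_0$, with $X_0$ as its only log canonical place centred over $X_0$; likewise $(Y,Y_0+D_Y)$ is lc near $Y_0$. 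It suffices to show that the birational map $\rho\colon X\dashrightarrow Y$ over $C$ is an isomorphism carrying $D_X$ to $D_Y$.

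Choose a common resolution $p\colon Z\to X$, $q\colon Z\to Y$ (e.g.\ the normalisation of the closure of the graph of $\rho$), with $p,q$ isomorphisms over $C^\circ$. Set $h:=fp=gq\colon Z\to C$ and $Z_0:=h^*(0)$, so that $Z_0=p^*X_0=q^*Y_0$. Let $\bar X_0,\bar Y_0$ be the strict transforms of $X_0,Y_0$; each is a component of $Z_0$ of multiplicity one, the components of $Z_0$ other than $\bar X_0$ are exactly the $p$-exceptional prime divisors, and those other than $\bar Y_0$ are exactly the $q$-exceptional ones. Consider $\Delta:=p^*(K_X+D_X)-q^*(K_Y+D_Y)$. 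It vanishes over $C^\circ$, where $\rho$ identifies $K_X+D_X$ with $K_Y+D_Y$, so $\Delta$ is supported on $Z_0$; moreover $\Delta\sim_{\bQ}0$. A $\bQ$-linearly trivial divisor supported on the connected fibre $Z_0$ of a morphism to a smooth curve is a rational multiple of $Z_0$, so $\Delta=\mu Z_0$ for some $\mu\in\bQ$. Writing $p^*(K_X+X_0+D_X)=K_Z+\sum_E\bigl(1-A_{(X,X_0+D_X)}(E)\bigr)E$ and similarly over $Y$, and using $p^*X_0=q^*Y_0=Z_0$, the identity $\Delta=\mu Z_0$ becomes, for every prime divisor $E$ over $X$,
\[A_{(Y,Y_0+D_Y)}(E)-A_{(X,X_0+D_X)}(E)=\mu\cdot\ord_E(Z_0).\]

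Taking $E=\bar X_0$, where $\ord_{\bar X_0}(Z_0)=1$ and $A_{(X,X_0+D_X)}(\bar X_0)=0$, gives $\mu=A_{(Y,Y_0+D_Y)}(\bar X_0)\ge 0$, since $(Y,Y_0+D_Y)$ is lc near $Y_0$ and $\bar X_0$ is centred over $Y_0$. Taking $E=\bar Y_0$ gives symmetrically $\mu=-A_{(X,X_0+D_X)}(\bar Y_0)\le 0$. Hence $\mu=0$ and $A_{(X,X_0+D_X)}(\bar Y_0)=0$; but $(X,X_0+D_X)$ is plt near $X_0$, so its only log canonical place centred over $X_0$ is $\bar X_0$, forcing $\bar Y_0=\bar X_0$. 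Consequently $p$ and $q$ have the same exceptional prime divisors, and from $\Delta=0$ we obtain $p^*(K_X+D_X)=q^*(K_Y+D_Y)$; since $p^*D_X$ and $q^*D_Y$ share the same strict transform on $Z$ (as $\rho$ matches $D_X$ with $D_Y$ and neither contains the central fibre), the $\bQ$-divisor $B:=p^*K_X-q^*K_Y=q^*D_Y-p^*D_X$ is supported on the common exceptional locus, whence $p_*B=q_*B=0$. For any curve $\gamma$ contracted by $p$ one has $B\cdot\gamma=(-K_Y)\cdot q_*\gamma\ge 0$, because $q_*\gamma$ is an effective $1$-cycle on $Y_0$ and $-K_Y$ is ample on $Y_0$; thus $B$ is $p$-nef, and since $p_*B=0$ the negativity lemma forces $-B\ge 0$. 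Symmetrically, using that $-K_X$ is ample on $X_0$, $-B$ is $q$-nef and $B\ge 0$. Therefore $B=0$, i.e.\ $p^*K_X=q^*K_Y$.

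It follows that $p^*\cO_X(-mrK_X)\cong q^*\cO_Y(-mrK_Y)$ for all $m\ge 0$, where $r$ is chosen so that $rK_X,rK_Y$ are Cartier; pushing forward by $h$ identifies the graded $\cO_C$-algebras $\bigoplus_m f_*\cO_X(-mrK_X)$ and $\bigoplus_m g_*\cO_Y(-mrK_Y)$ compatibly with $\rho$. Since $-K_X$ is $f$-ample one has $X=\mathrm{Proj}_C\bigoplus_m f_*\cO_X(-mrK_X)$, and likewise for $Y$, so $X\cong Y$ over $C$; this isomorphism extends $\rho$ and carries $D_X$ to $D_Y$, since the strict transforms of $D_X$ and $D_Y$ on $Z$ coincide. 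The crux is the equality $\bar X_0=\bar Y_0$: this is exactly where the asymmetry between the klt hypothesis on $(X_0,D_X|_{X_0})$ and the merely lc hypothesis on $(Y_0,D_Y|_{Y_0})$ is used, through the two opposite sign bounds on $\mu$; everything after that point is formal given the negativity lemma.
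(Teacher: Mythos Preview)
Your argument is correct and complete. The paper itself does not give a proof but simply cites \cite[Theorem 5.2]{LWX}; what you have written is precisely the standard argument behind that citation (common resolution, comparison of log discrepancies of the two central fibres via inversion of adjunction to pin down $\mu=0$ and $\bar X_0=\bar Y_0$, then the negativity lemma and the relative anticanonical $\mathrm{Proj}$), so the approaches coincide.
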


\begin{proof}
This follows from the exact same proof of \cite[Theorem 5.2]{LWX}.
\end{proof}

\begin{proof}[Proof of Theorem \ref{thm:separatedness}]
Since birationally superrigid Fano varieties have terminal singularities, $K_X$ and $K_Y$ are $\bQ$-Cartier by \cite[Proposition 3.5]{dFH-deform-Fano}. Hence the result follows from Theorem \ref{thm:noether-fano} and the following more general statement.
\end{proof}

\begin{lem}
Let $f:X\rightarrow C$, $g:Y\rightarrow C$ be flat families of $\bQ$-Fano varieties over a smooth pointed curve $0\in C$ that are isomorphic over $C^\circ=C\backslash 0$. Let $X_0$ and $Y_0$ be their central fibers. Assume that
\begin{enumerate}
    \item $K_X$ and $K_Y$ are $\bQ$-Cartier.
    \item For every movable boundary $M_X\sim_\bQ -K_{X_0}$, $(X_0,M_X)$ is klt.
    \item For every movable boundary $M_Y\sim_\bQ -K_{Y_0}$, $(Y_0,M_Y)$ is lc.
\end{enumerate}
Then $X\cong Y$ over $C$.
\end{lem}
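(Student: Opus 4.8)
The plan is to reduce the statement to Lemma \ref{lem:unique filling} by producing, on the $X$-side, an effective $\bQ$-divisor $D_X \sim_\bQ -K_X$ whose restriction to the central fiber $X_0$ is klt, and on the $Y$-side a divisor $D_Y \sim_\bQ -K_Y$ which is lc on $Y_0$, matched under the given isomorphism $\rho$ over $C^\circ$. Since $f$ and $g$ agree over the punctured curve, the natural source of such divisors is a single divisor on $X$ (or $Y$) chosen to detect the stability-type singularities of the special fibers. First I would pick an ample line bundle and, using the finite generation coming from $\bQ$-Cartierness and relative Fano-ness, produce an effective $D_X \sim_\bQ -K_X$ not containing $X_0$; restricting, $D_X|_{X_0}$ is an effective $\bQ$-divisor linearly equivalent to $-K_{X_0}$. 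The issue is that a single such divisor need not be klt on $X_0$ — but a \emph{movable} family of them will be, by hypothesis (2), after passing to a general member.

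So the key construction is: over $C$, consider the (relative) linear system $|-mK_X|$ for $m$ large and divisible; since $X\to C$ is a flat family of $\bQ$-Fano varieties this is relatively base point free away from a subscheme not dominating $C$, and a general member $D_X \in \frac{1}{m}|-mK_X|$ restricts to a general member of $\frac{1}{m}|-mK_{X_0}|$ on the special fiber. The restriction $\cM_{X_0} := \frac{1}{m}|-mK_{X_0}|$ is a movable linear system on $X_0$ with $\cM_{X_0}\sim_\bQ -K_{X_0}$, so hypothesis (2) gives that $(X_0, \cM_{X_0})$ is klt, hence a general member $D_X|_{X_0}$ makes $(X_0, D_X|_{X_0})$ klt. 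Now transport $D_X$ across the isomorphism over $C^\circ$: let $D_Y$ be the closure in $Y$ of $\rho_*(D_X|_{X\setminus X_0})$. Then $D_Y \sim_\bQ -K_Y$ (the linear equivalence extends across the central fiber since it holds over the dense open $C^\circ$ and the relevant groups inject), $D_Y$ does not contain $Y_0$, and $D_Y|_{Y_0}$ is an effective movable-type divisor linearly equivalent to $-K_{Y_0}$. Here a little care is needed: $D_Y|_{Y_0}$ is a \emph{single} member, not automatically movable; but its class $\sim_\bQ -K_{Y_0}$ together with hypothesis (3) applied to the movable system it generates does \emph{not} directly bound a single member, so instead I would argue that since $D_Y|_{Y_0}$ moves in the restricted linear system $\frac{1}{m}|-mK_{Y_0}|$ (being the restriction of the mobile $\frac{1}{m}|-mK_Y|$), hypothesis (3) gives $(Y_0, \frac{1}{m}|-mK_{Y_0}|)$ lc and hence a general such $D_Y$ gives $(Y_0, D_Y|_{Y_0})$ lc — and one may choose $D_X$ general enough that its transform $D_Y$ is a general member too.

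With $(X_0, D_X|_{X_0})$ klt, $(Y_0, D_Y|_{Y_0})$ lc, and $\rho$ identifying $(X, D_X)\times_C C^\circ$ with $(Y, D_Y)\times_C C^\circ$, Lemma \ref{lem:unique filling} applies verbatim and yields an isomorphism $(X, D_X)\cong (Y, D_Y)$ extending $\rho$; in particular $X\cong Y$ over $C$, which is the claim. I expect the main obstacle to be the bookkeeping in the previous paragraph: ensuring that one can choose $D_X$ general on the total space so that (i) it avoids $X_0$, (ii) its restriction to $X_0$ is a general member of the mobile system (so that klt-ness on $X_0$ is inherited from hypothesis (2)), and (iii) simultaneously its transform $D_Y$ restricts to a general — hence lc — member of the corresponding mobile system on $Y_0$. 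This is a standard Bertini-with-parameters argument once one notes that the isomorphism over $C^\circ$ identifies the relative linear systems $|-mK_X|$ and $|-mK_Y|$ away from the central fibers, so "general on $X$" and "general on $Y$" are compatible conditions; the only genuinely delicate point is that base loci and non-normal loci of these systems do not dominate $C$, which follows from relative base point freeness of $|-mK|$ for $m\gg0$ on a family of $\bQ$-Fano varieties.
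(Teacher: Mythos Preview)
There is a genuine gap in the step where you transport $D_X$ to $Y$ and claim that $D_Y|_{Y_0}$ is a general member of $\frac{1}{m}|-mK_{Y_0}|$. Over $C^\circ$ the relative linear systems agree, but this does \emph{not} make ``general on $X$'' compatible with ``general on $Y$'' at the central fibre. Concretely, over the local ring $R=\cO_{C,0}$ the pushforwards $f_*\cO_X(-mK_X)$ and $g_*\cO_Y(-mK_Y)$ are two $R$-lattices in the same $K(C)$-vector space, and a general element of the first can reduce modulo $\fm$ to a very special element of the second: if for instance $V_X=R\oplus R$ and $V_Y=R\oplus t^{-1}R$, every general $s\in V_X$ lands, after the obvious identification, in the line $(\ast,0)\subseteq V_Y/tV_Y$. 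In that situation $D_Y|_{Y_0}$ is a \emph{fixed} divisor, and hypothesis~(3), which concerns only movable boundaries, says nothing about it. The Bertini-with-parameters reasoning you invoke works within a single family; it does not compare two distinct fillings of the same generic fibre.

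The paper avoids this by taking one general divisor from \emph{each} side: a general $D_1\in|-mK_{X_0}|$ extended to $D_{X,1}$ on $X$, and a general $D_2\in|-mK_{Y_0}|$ extended to $D_{Y,2}$ on $Y$; then transport each to the other family to obtain $D_{Y,1}$ and $D_{X,2}$. Since $D_1$ is general it shares no component with $D_2':=D_{X,2}|_{X_0}$, so the pencil $\cM_X=\langle D_{X,1},D_{X,2}\rangle$ restricts to a genuinely movable boundary on $X_0$; symmetrically for $Y_0$. Now hypotheses~(2) and~(3) apply directly, and a general member of this pencil (which is simultaneously general on both sides by construction) feeds into Lemma~\ref{lem:unique filling}. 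The point you are missing is that to get movability on $Y_0$ you must start from an honestly general divisor on $Y_0$, not merely transport one from $X$.
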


\begin{proof}
By assumption $X$ is birational to $Y$ over $C$. Let $m$ be a sufficiently large and divisible integer and let $D_1\in |-mK_{X_0}|$, $D_2\in |-mK_{Y_0}|$ be general divisors in the corresponding linear system. Choose effective divisors $D_{X,1}\sim_\bQ -mK_X$, $D_{Y,2}\sim_\bQ -mK_Y$ not containing $X_0$ or $Y_0$ such that $D_{X,1}|_{X_0}=D_1$ and $D_{Y,2}|_{Y_0}=D_2$. Let $D_{Y,1}$ and $D_{X,2}$ be their strict transforms to the other family. Let $D'_1=D_{Y,1}|_{Y_0}$ and $D'_2=D_{X,2}|_{X_0}$. Let $\cM_X$ be the linear system spanned by $D_{X,1}$ and $D_{X,2}$ and let $M_X=\frac{1}{m}\cM_X\sim_\bQ -K_X$. Similar we have $\cM_Y$ and $M_Y\sim_\bQ -K_Y$. As $D_1$ and $D_2$ are general, $D_1$ and $D'_2$ have no common components, hence the restriction of $M_X$ to $X_0$ is still a movable boundary and therefore by our second assumption, $(X_0,M_X|_{X_0})$ is klt. Similarly, $(Y_0,M_Y|_{Y_0})$ is lc and we conclude by Lemma \ref{lem:unique filling}.
\end{proof}

\bibliography{ref}
\bibliographystyle{alpha}

\end{document}